\documentclass[11pt]{article}

\usepackage[T1]{fontenc} 
\usepackage{lmodern}
\usepackage{blindtext}

\usepackage[utf8]{inputenc}
\usepackage{amsfonts} 
\usepackage{dsfont}
\usepackage{amsmath}
\usepackage{amsmath,latexsym}
\usepackage{amsmath,amssymb}
\usepackage{amsmath,amsthm}
\usepackage[english]{babel}
\usepackage{moreverb}
\usepackage[]{graphics}
\usepackage[]{latexsym} 
\usepackage{amscd}
\usepackage{color}
\usepackage{lineno}
\usepackage{ae}
\usepackage{enumerate}
\usepackage{multicol} 

\usepackage[figuresright]{rotating}
\usepackage{bigints}
\graphicspath{{FIGURES/}}
\makeatletter
\newcommand{\xrightharpoonup}[2][]{\ext@arrow 0359\rightharpoonupfill@{#1}{#2}}
\makeatother
\usepackage{hyperref}
\hypersetup{
	colorlinks=true,
	linkcolor=blue,
	filecolor=magenta,      
	urlcolor=cyan,
}

\usepackage{graphics}
\usepackage{graphicx}

\newcommand{\s}{\sigma}  
\newcommand{\ds}{\displaystyle}

\def\R{\mathbb{R}}

\def\P{\mathbb{P}}
\def\E{\mathbb{E}}

\def\N{{\rm I\hspace{-0.50ex}N} } 
\def\N{\mathbb{N}}

\def\notin{\,\hbox{\it / \hskip -10pt $\in$}\,}

\textwidth  16,5cm
\textheight  21,5cm
\hoffset=-2,25cm
\voffset=-1,5cm

\newtheorem{lem}{Lemma}[section]
\newtheorem{thm}[lem]{Theorem}

\newtheorem{dfn}[lem]{Definition}
\newtheorem{prop}[lem]{Proposition}

\title{\bf Locally Optimal Functional Quantization}
\author{ 
{\sc Harald Luschgy}~\thanks{Universit\"at Trier, FB IV-Mathematik, D-54286 Trier, Germany. E-mail: {\tt luschgy@uni-trier.de}} 
\and   
{\sc  Gilles Pag\`es}~\thanks{Sorbonne Universit\'e, Laboratoire de Probabilit\'es, Statistique et Mod\'elisation, case 158, 4, pl. Jussieu, F-75252 Paris Cedex 5, France. E-mail: {\tt  gilles.pages@sorbonne-universite.fr}}~\thanks{This research
benefited from the support of the ``Chaire Risques Financiers'', Fondation du Risque.}  }

\begin{document}
	\maketitle
	
	\begin{abstract} In this note we demonstrate that locally optimal functional quantizers for probability distributions  $P$ on a Banach space lying in the support of $P$ 	behave exactly like globally optimal functional quantizers in terms of stationarity.
	\end{abstract}
	\paragraph{Keywords:} functional quantization ; stationary quantizer ; locally optimal quantizers.
	\bigskip
	\section{Introduction} We will use the notations of~\cite{LuPag23}. Let  $X$ be a Radon random vector on a probability space $(\Omega, {\cal A}, \P)$ taking its values in a Banach space $(E, \|\cdot\|)$ equipped with its Borel $\s$-algebra ${\cal B}or(E)$. This means that $X$ is $\big({\cal A}, {\cal B}or(E)\big)$-measurable and its distribution $P= \P_{_X}$ is a Radon probability measure on ${\cal B}or(E)$ (in the sense that it is inner regular). Notice that stochastic processes in continuous time can often be seen as random vectors having values in a Banach function space.
	
Let 
	\[
	d(x,A):= \inf\{\|x-a\|: \; a\!\in A\}
	\]
denote the distance of $x\!\in E$	to the nonempty subset $A\subset E$. For $n\!\in \N$ and $r\!\in (0, +\infty)$ the $L^r$-quantization problem for $P$ (or $X$) at level $n$ consists in minimizing 
\begin{equation}\label{eq:e_r(Gamma)}
e_r(\Gamma, P) := \big(\E\, d(X, \Gamma)^r \big)^{1/r}
\end{equation}
over all subsets$\Gamma\subset E$, $\Gamma\neq \varnothing$ with ${\rm card}(\Gamma)\le n$. Such a subset is called {\em $n$-quantizer} (or {\em $n$-codebook} or {\em $n$-grid}).

\begin{dfn}$(a)$ The $n$th-level $L^r$-quantization error of $P$ is defined as 
\[
e_{r,n}(P,E) := \inf\big\{ e_r(\Gamma,P): \Gamma\subset E, \; 1\le {\rm card}((\Gamma) \le n \big\}.
\]

\noindent $(b)$ Assume $\E\, \|X\|^r= \int \|x\|^r dP(x) <+\infty$ (so that $e_{r,n}(P,E)<+\infty$ for every $n\ge 1$). A quantizer $\Gamma\subset E$ with ${\rm card}(\Gamma)\le n$ is called $L^r$-optimal $n$-quantizer  for $P$ of 
\[
e_{r}(\Gamma,P)= e_{r,n}(P,E). 
\]
Let ${\cal C}_{r,n}(P,E)$ denote the set of all $L^r$-optimal $n$-quantizers for $P$. 
\end{dfn}	

A Voronoi quantization of $X$ induced by a finite quantizer $\Gamma \subset E$ is simply the projection of $X$ onto $\Gamma$ following the nearest neighbour rule as explained below.

\begin{dfn} Let $\Gamma\subset E$ be a finite quantizer and $a\!\in \Gamma$. Then 
\[
V_a(\Gamma):= \big\{x\!\in E: \|x-a\| = d(x,\Gamma)\big\}
\] 
is called the (closed) Voronoi cell and
\[
 W_a(\Gamma) := \big\{x\!\in E: \|x-a\| <d(x,\Gamma\setminus\{a\})\big\}
\]
the open Voronoi cell of $a$ induced by $\Gamma$. A Borel measurable partition $\{C_a(\Gamma): a\!\in\Gamma\}$ of $E$ is called Voronoi partition induced by $\Gamma$ if
\[
C_a(\Gamma)\subset V_a(\Gamma) \; \mbox{ for every } a\!\in \Gamma.
\]
\end{dfn}		

Since the distance function is continuous, $V_a(\Gamma)$ is in fact a closed set of $E$ and $W_a(\Gamma)$ an open set. It should also be obvious that, for a Voronoi partition $\{C_a(\Gamma): a\!\in \Gamma\}$,
\begin{equation}\label{eq:WsubseC}
W_a(\Gamma)\subset C_a(\Gamma) \mbox{ for every } \; a\!\in \Gamma.
\end{equation}	
	
Given a finite quantizer $\Gamma\subset E$ and a Voronoi partition $\{C_a(\Gamma): a\!\in \Gamma\}$ of $E$ induced by $\Gamma$, define
\[
{\rm Proj}_{\Gamma}: E\to E,\quad {\rm Proj}_{\Gamma}(x) := \sum_{a\in \Gamma}\mbox{\bf 1}_{C_a(\Gamma)}(x).
\]	
Note that ${\rm Proj}_{\Gamma}$ depends on the Voronoi partition and not only on $\Gamma$.	 The random vector
\[
\widehat X =\widehat X^{\Gamma} =  {\rm Proj}_{\Gamma}(X)
\]
is called {\em Voronoi quantization} of $X$ induced by $\Gamma$ or {\em $\Gamma$-quantization} 	 of $X$. Its $L^r$-(mean)error satisfies 
\[
\big( \E\, \|X-\widehat X\|^r\big)^{1/r} = e_r(\Gamma,P)
\]	
and so does not depend on the specific choice of the Voronoi partition induced by $\Gamma$.	Functional quantization can thus be seen as a way to discretize the path space of a stochastic process. So far, we have dealt with (functional) quantizers viewed as a finite subset of $E$. For our present purpose it is convenient to introduce them also a $n$-tuples. The {\em $L^r$-distortion function of level $n$} is defined by 
\begin{equation}\label{eq:r-distor}
G_{r,n}^P : E^n\to \R_+, \; G_{r,n}^P(a) := \E\min_{1\le i\le n} \|X - a_i\|^r.
\end{equation}
Note that $G_{r,n}^P$ is continuous w.r.t. the product topology on $E^n$ (in fact $(G_{r,n}^P)^{1/r}$ is even Lipschitz continuous  with Lipschitz coefficient $1$ w.r.t. the $\max$-norm $\|(x_1,\ldots,x_n)\|_{\ell^{\infty}} :=\max_{1\le i\le n}\|x_i\| $ on $E^n$ if $r \ge 1$), convex for $n=1$ and $r\ge 1$ but typically not convex if $n\ge 2$. We have
\[
e_{r,n}(P,E)^r = \inf_{a\in E^n} G_{r,n}^P(a).
\]
We will introduce the notion of $L^r$-stationarity quantizers as critical points of $G_{r,n}^P$. 

The Banach space	$E$ and its norm $\|\cdot\|$ is said to be smooth if the norm is G\^ateaux-differentiable at any $x\neq 0$, where a function $F:E\to \R$ is {\em G\^ateaux-differentiable} at $x\!\in E$ if there exists $\nabla F(x) \!\in E^*$ (topological dual space of $E$) such that 
\[
\lim_{t\to 0}\frac{F(x+ty)-F(x)}{t} = \langle \nabla F(x),y\rangle
\]   
for every $y\!\in E$. We write $\langle u, y\rangle$ for $u(y)$ for $u\!\in E^*$ and $y\!\in E$.  Then $\nabla \|\cdot\|(x)$ is contained in the unit sphere of $E^*$ when the norm is G\^ateaux-differentiable at $x$. In fact, we only need almost sure smoothness of the norm. (This weaker condition is of interest e.g. for $L^1$-spaces $E$.). 

For convenience, a finite quantizer $\Gamma\subset E$ is called {\em admissible} for $P$ if 
\begin{equation}\label{eq:admissible}
P\left( \bigcup_{a\in \Gamma} W_a(\Gamma)\right) =1.
\end{equation}

An $n$-tuple $(a_1,\ldots,a_n)\!\in E^n$ with $a_i\neq a_j$ is admissible for  $P$ if its associated $n$-quantizer $\{a_1,\ldots,a_n\}$ is. 

\begin{prop} Let $r\ge 1$ and assume $\ds \int_E \|x\|^r dP(x)<+\infty$. Assume that the norm is G\^ateaux-differentiable at $a-x$ $P(dx)$-$a.e.$ on $E\setminus\{a\}$. Let $r>1$. Then the $L^r$-distortion function $G_{r,n}^P$ is G\^ateaux-differentiable at every admissible $n$-tuple $(a_1,\ldots,a_n)$ with a G\^ateaux derivative given by
\[
\nabla G^P_{r,n} (a_1,\ldots, a_n) = r \bigg(\E\, \mbox{\bf 1}_{C_{a_i}(\Gamma)\setminus\{a_i\}}(X) \|X-a_i\|^{r-1}\nabla \|\cdot\|(a_i-X)\bigg)_{1\le i \le n} \!\in (E^*)^n,
\]
where $\{C_{a_i}(\Gamma): 1\le i\le n\}$  denotes any Voronoi partition  induced by $\Gamma=\{a_1,\ldots, a_n\}$.

\smallskip When $r=1$, the above result holds true for admissible $n$-tuples $(a_1,\ldots,a_n)$ satisfying\\ $P(\{a_1,\ldots, a_n\})=0$.
\end{prop}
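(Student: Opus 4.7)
The plan is to compute the derivative at $t=0$ of
\[
\Phi(t) := G_{r,n}^P(a_1+ty_1,\ldots,a_n+ty_n) = \int_E h_t(x)\,dP(x), \qquad h_t(x) := \min_{1\le j\le n}\|x-a_j-ty_j\|^r,
\]
along an arbitrary direction $(y_1,\ldots,y_n) \in E^n$, by differentiating under the integral sign via Lebesgue's dominated convergence theorem, and then to check that the map $(y_1,\ldots,y_n)\mapsto\Phi'(0)$ is a continuous linear form on $E^n$.

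First I would handle the pointwise derivative. Set $\Gamma = \{a_1,\ldots,a_n\}$. For $x \in W_{a_i}(\Gamma)$ the strict inequality $\|x-a_i\| < \|x-a_j\|$ ($j\neq i$) persists for small $|t|$ by continuity, so $h_t(x)=\|x-a_i-ty_i\|^r$ identically on some $t$-neighbourhood of $0$. The chain rule applied at $x\neq a_i$ (where the norm is Gâteaux-differentiable at $a_i-x$), combined with the antisymmetry $\nabla\|\cdot\|(-u)=-\nabla\|\cdot\|(u)$ that follows from evenness of the norm, yields
\[
\frac{d}{dt}h_t(x)\Big|_{t=0} = r\,\|x-a_i\|^{r-1}\langle \nabla\|\cdot\|(a_i-x),y_i\rangle.
\]
For $r>1$ the prefactor $\|x-a_i\|^{r-1}$ vanishes at $x=a_i$, so the formula extends by convention; for $r=1$ the hypothesis $P(\{a_1,\ldots,a_n\})=0$ removes the exceptional set. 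Admissibility ensures $P\bigl(\bigcup_i W_{a_i}(\Gamma)\bigr)=1$, hence the pointwise derivative exists $P$-a.s.

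Next I would supply an integrable majorant. A comparison of the minimizers at times $0$ and $t$ yields
\[
\bigl|h_t(x)-h_0(x)\bigr| \le \max_{1\le j\le n}\bigl|\,\|x-a_j-ty_j\|^r - \|x-a_j\|^r\bigr|.
\]
Combining this with the elementary inequality $|a^r-b^r| \le r\max(a,b)^{r-1}|a-b|$ valid for $a,b\ge 0$, $r\ge 1$, and the Lipschitz bound $\bigl|\|x-a_j-ty_j\|-\|x-a_j\|\bigr|\le |t|\,\|y_j\|$ gives, for $|t|\le 1$,
\[
\frac{|h_t(x)-h_0(x)|}{|t|} \le r\max_{1\le j\le n}\bigl(\|x\|+\|a_j\|+\|y_j\|\bigr)^{r-1}\|y_j\|,
\]
whose integral under $P$ is finite thanks to $\int\|x\|^r dP<\infty$ and $r\ge 1$. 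Dominated convergence then delivers $\Phi'(0)$ in the claimed form; since admissibility forces $W_{a_i}(\Gamma)=C_{a_i}(\Gamma)$ up to a $P$-null set, $W_{a_i}$ can be replaced by $C_{a_i}$ in the final expression. Linearity in $(y_1,\ldots,y_n)$ is manifest; continuity of the $i$-th partial on $E$ follows from $\|\nabla\|\cdot\|(a_i-x)\|_{E^*}\le 1$ and integrability of $\|X-a_i\|^{r-1}$, so $\nabla G_{r,n}^P(a_1,\ldots,a_n)\in(E^*)^n$.

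The main obstacle is the uniform domination: a single-coordinate bound is inadequate because the identity of the minimizing index can jump with $t$, so the global $\max_j$ comparison above is essential to circumvent having to track the active index pointwise. The separation of the case $r=1$ is a minor technicality addressing the non-differentiability of the norm at $0$, neutralized by the hypothesis $P(\{a_1,\ldots,a_n\})=0$.
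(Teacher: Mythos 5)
The proposal is correct and follows the standard route for this result: reduce to a one-variable derivative $\Phi'(0)$ along an arbitrary direction, identify the pointwise derivative on each open Voronoi cell via the chain rule and the antisymmetry $\nabla\|\cdot\|(-u)=-\nabla\|\cdot\|(u)$, and justify the interchange by dominated convergence using the $\max$-over-indices comparison of the two minima together with $|a^r-b^r|\le r\max(a,b)^{r-1}|a-b|$. Since the paper itself only cites Proposition~1.3.1 of \cite{LuPag23} rather than reproducing a proof, there is nothing to contrast in detail; your argument is complete, correctly isolates the role of admissibility (so the $W_{a_i}$ exhaust $E$ up to a $P$-null set, letting one pass to any Voronoi partition $C_{a_i}$) and of the extra hypothesis $P(\{a_1,\ldots,a_n\})=0$ when $r=1$, and this is essentially the expected differentiation-under-the-integral proof.
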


\noindent {\bf Proof.} See Proposition~1.3.1 in~\cite{LuPag23} .\hfill$\Box$

\bigskip
Note that $(E^n)^*= (E^*)^n$ with duality $\langle u, x\rangle= \sum_{j=1}^n \langle u_j,x_j\rangle$, where $u=(u_1,\ldots,u_n)\!\in (E^*)^n$ and $x=(x_1,\ldots,x_n) \!\in E^n$. Let us point out that the integrals (expectations) occurring in the above formulas for $\nabla G_{r,n}^P$ and in the subsequent definition are to be seen as {\em Gelfand integrals}. Our notion of stationarity avoids admissibility.

\begin{dfn}Let $r\ge 1$ and assume $\ds \int_E \|x\|^rdP(x) <+\infty$. Assume that the norm is G\^ateaux-differentiable at $a-x$ $P(dx)$-$a.e.$ on $E \setminus\{a\}$ for every $a\!\in E$. A quantizer $\Gamma\subset E$ with ${\rm card}(\Gamma) = n$ and $P(\Gamma)=0$ when $r=1$ is called $L^r$stationary $n$-quantizer for $P$  if there exists  a Voronoi partition $\{C_a(\Gamma): a\!\in \Gamma\}$ induced by $\Gamma$ such that $P\big( C_a(\Gamma)\big) >0$ and 
\begin{equation}\label{eq:astionarite0}
 \E\, \mbox{\bf 1}_{C_{a}(\Gamma)\setminus\{a\}}(X) \|X-a\|^{r-1}\nabla \|\cdot\|(a-X)\stackrel{E^*}{=}0
\end{equation}
for every $a\!\in \Gamma$. (Here $x\stackrel{E^*}{=} 0$ means that $x$ is equal to the zero of $E^*$).
\end{dfn}

\section{Local minima of the distortion functions}
Let ${\rm supp}(P)$ denote the support of $P$ which exists in our setting.

Local minima of the $L^r$-distortion function $G_{r,n}^P$ with components lying in ${\rm supp}(P)$ behave exactly like  global minima although $G_{r,n}^P$ is not convex for $n\ge 2$. For global minima see~\cite[Sections 1.2 and 1.3]{LuPag23}. In the subsequent results let $r\!\in(0, +\infty)$ and assume $\ds \int_E \|x\|^rdP(x) <+\infty$.

\begin{thm}\label{thm:1.3.4} 
Let $a=(a_1,\ldots,a_n)$ be a local minimum of $G_{r,n}^P$ with $a_i $ a non-isolated point of ${\rm supp}(P)$  for every $i\!\in \{1,\ldots,n\}$.

\smallskip
\noindent $(a)$ The components $a_i$ of $a$ are pairwise distinct and if $r\ge 1$, then for every $i\!\in \{1, \ldots,n\}$ and every $C \!\in {\cal B}(E)$ with $W_{a_i}\big(\{a_1,\ldots,a_n\}\big)\subset C \subset V_{a_i}\big(\{a_1,\ldots,a_n\}\big)$,
\[
a_i \!\in {\rm argmin}_{y \in E} G_{r,1}^{P(\cdot|C)}(y)
\]
or, equivalently, $\{a_i\} \!\in {\cal C}_{r,1} \big(P(\cdot\,|\,C), E\big)$.

\smallskip
\noindent  $(b)$ Let $r\ge 1$. Assume that the norm is G\^ateaux differentiable at $b-x$ $P(\mathrm{d}x)$-a.e. on $E\setminus \{b\}$ for every element $b$ of $E$ and $P(\{a_i\})=0$ for every $i$ when $r=1$. Then $\Gamma= \{a_1,\ldots,a_n\}$ is an $L^r$-stationary $n$-quantizer for $P$. If $E$ is strictly convex, then $\Gamma$ is admissible for $P$.
\end{thm}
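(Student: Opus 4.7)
My plan splits into four parts matching the assertions of the theorem. For pairwise distinctness in~(a), I argue by contradiction: if $a_i=a_j$ for some $i\neq j$, non-isolatedness of $a_i$ in ${\rm supp}(P)$ lets me pick $y\in{\rm supp}(P)\setminus\{a_1,\dots,a_n\}$ arbitrarily close to $a_i$, and set $b=a_i+t(y-a_i)$ for small $t\in(0,1)$. The open half-space $\{x:\|x-b\|<\|x-a_i\|\}$ is a neighborhood of $y$ and hence has positive $P$-mass; intersecting with a small ball around $a_i$ (so the other centres stay far), the duplication $a_i=a_j$ means replacing $a_j$ by $b$ leaves the $\min$-structure elsewhere unchanged yet strictly lowers the integrand on this positive-mass set. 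Letting $t\to 0^+$ produces arbitrarily small improving perturbations, contradicting local minimality.

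For the argmin claim in~(a), I fix $i$ and a Voronoi partition $\{C_k\}$ with $C_i=C$ for any Borel $C$ with $W_{a_i}\subset C\subset V_{a_i}$, and perturb only the $i$-th coordinate. The key inequality
\[
G_{r,n}^P(a_1,\dots,b,\dots,a_n)\le \int_C\|x-b\|^r\,dP(x)+\sum_{k\neq i}\int_{C_k}\|x-a_k\|^r\,dP(x)
\]
holds with equality at $b=a_i$. Local minimality therefore makes $a_i$ a local minimizer of the convex (for $r\ge 1$) map $b\mapsto\int_C\|x-b\|^r\,dP(x)$, hence a global one; normalising by $P(C)>0$ (positivity because $W_{a_i}\subset C$ is an open neighborhood of $a_i\in{\rm supp}(P)$) yields $\{a_i\}\in{\cal C}_{r,1}(P(\cdot\,|\,C),E)$.

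For stationarity in~(b), I apply the G\^ateaux-derivative proposition cited above to the $1$-tuple $(a_i)$ for the conditional law $P(\cdot\,|\,C_{a_i})$; admissibility is automatic since $W_{a_i}(\{a_i\})=E$, and the supplementary condition $P(\{a_i\})=0$ when $r=1$ is assumed. Since part~(a) produces $a_i$ as a minimizer of this $1$-distortion, the G\^ateaux derivative vanishes at $a_i$, and after undoing the normalisation $P(\cdot\,|\,C_{a_i})=P(\cdot\cap C_{a_i})/P(C_{a_i})$ this is exactly~\eqref{eq:astionarite0}.

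The final strict-convexity assertion is the main obstacle. I apply part~(a) twice, with $C=V_{a_i}$ and $C=V_{a_i}\setminus B_{ij}$ where $B_{ij}=V_{a_i}\cap V_{a_j}$; both lie between $W_{a_i}$ and $V_{a_i}$, so both make $a_i$ a global (hence critical) point of the corresponding convex functional. Equating G\^ateaux derivatives at $a_i$ (well-defined under the smoothness of $\|\cdot\|$ and $P(\{a_i\})=0$) and subtracting shows $a_i$ is a critical---hence, by convexity, minimising---point of $y\mapsto\int_{B_{ij}}\|y-x\|^r\,dP(x)$. By symmetry the same functional is also minimised at $a_j$. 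When $r>1$ and $E$ is strictly convex, $y\mapsto\|y-x\|^r$ is strictly convex, so the integral is strictly convex whenever $P(B_{ij})>0$, forcing $a_i=a_j$ and contradicting pairwise distinctness; hence $P(B_{ij})=0$. For $r=1$, strict convexity of $y\mapsto\|y-x\|$ fails only along lines through $x$, but the only point on the line through $a_i$ and $a_j$ that lies in the bisector $B_{ij}$ is the midpoint $(a_i+a_j)/2$, from which an analogous contradiction can be extracted.
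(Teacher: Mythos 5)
Your part~(a) is correct and close in spirit to the paper's argument, with two small variations. For pairwise distinctness, the paper simply inserts a fresh support point $c\in(\mathrm{supp}(P)\setminus\Gamma)\cap U_i$ directly into the grid and exploits $P\big(W_c(\Gamma\cup\{c\})\big)>0$; you instead perturb a duplicated centre towards such a $c$ and argue on the open set $\{x:\|x-b\|<\|x-a_i\|\}$ (calling it a ``half-space'' is Hilbert intuition, but all that matters is openness). Both work, yours just adds an unnecessary limit $t\to0^+$. For the conditional argmin property you give a cleaner direct argument (the inequality with equality at $b=a_i$ shows $a_i$ is a local, hence by convexity global, minimiser of $b\mapsto\int_C\|x-b\|^r\,dP$), whereas the paper argues by contradiction via $y_s=sz+(1-s)a_i$; both are fine. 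You should say a word about why a Voronoi partition with $C_{a_i}(\Gamma)=C$ exists for an arbitrary Borel $C$ between $W_{a_i}$ and $V_{a_i}$, but that is routine.

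For part~(b) the paper simply cites Proposition~1.3.3 of~\cite{LuPag23}, so no direct comparison is possible; your self-contained argument is a genuine addition. The stationarity step (apply the one-level G\^ateaux-derivative formula to $P(\cdot\,|\,C_{a_i}(\Gamma))$, note that one-point quantizers are always admissible and that $P(C_{a_i}(\Gamma))\ge P(W_{a_i}(\Gamma))>0$, then unnormalise) is sound. The admissibility argument is the nicest part: using part~(a) for both $C=V_{a_i}$ and $C=V_{a_i}\setminus B_{ij}$ (legitimate, since $W_{a_i}\cap V_{a_j}=\varnothing$ so $W_{a_i}\subset V_{a_i}\setminus B_{ij}$), subtracting the vanishing derivatives, and concluding that both $a_i$ and $a_j$ minimise $y\mapsto\int_{B_{ij}}\|y-x\|^r\,dP$ is correct. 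For $r>1$ strict convexity of $\|\cdot\|^r$ in a strictly convex space gives the contradiction cleanly. For $r=1$ you wave at ``an analogous contradiction''; the precise statement is that if this convex integral were constant on $[a_i,a_j]$, then for $P$-a.e.\ $x\in B_{ij}$ the map $y\mapsto\|y-x\|$ would be affine on $[a_i,a_j]$, which in a strictly convex space forces $x$ to lie on the extended line through $a_i,a_j$ but outside the open segment; no such $x$ can be equidistant from $a_i$ and $a_j$, so the set of admissible $x$ is empty and $P(B_{ij})=0$. Spelling that out closes the only real gap. Finally note explicitly that $E\setminus\bigcup_a W_a(\Gamma)=\bigcup_{i\neq j}(V_{a_i}\cap V_{a_j})$ so that $P(B_{ij})=0$ for all $i\neq j$ indeed yields admissibility.
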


\begin{proof} $(a)$ Let $U\subset E^n$ be an open neighbourhood of $a$ such that $G_{r,n}^P$ has a global minimum at $a$ on $U$. We may assume that $U = \prod_{i=1}^n U_i$ has a product form, where $U_i \subset E$ are open neighbourhoods of $a_i$. Let $N$ denote the number of pairwise distinct components of $a$ so that there exist $i_1=1<i_2<\cdots< i_N  \le n$ such that $a_{i_1}, \ldots,a_{i_N}$ are pairwise distinct. Set $\Gamma= \{a_{i_1},\ldots,a_{i_N}\}$. Assume $N\le n-1$.  Then there exists $i\!\in \{1, \ldots,n\}\!\setminus \!\{i_1,\ldots,i_{_N}\}$.

Choose $c\!\in \big( {\rm supp}(P)\setminus \Gamma\big)\cap U_i \neq \varnothing$ where we use that $a_i$ is not an isolated point of ${\rm supp}(P)$  and set $\Gamma_1 = \Gamma\cup \{c\}$. Let $\{C_b(\Gamma_1): b\!\in \Gamma_1\}$ be a Voronoi partition of $E$ induced by $\Gamma_1$ such that $C_c(\Gamma_1) = W_c(\Gamma_1)$. Now
\[
d(x, \Gamma_1) = \|x-c\| < d(x,\Gamma) \quad\mbox{ on }\quad C_c(\Gamma_1) = W_c(\Gamma_1)
\]
so that, using $P\big(W _c(\Gamma_1)\big)>0$,
\begin{align*}
G_{r,n}^P(a)&= \int d(x, \Gamma)^r \mathrm{d}P(x)\\
& = \sum_{b\in \Gamma} \int_{C_b(\Gamma_1)} d(x,\Gamma)^r\mathrm{d}P(x) + \int_{C_c(\Gamma_1)} d(x,\Gamma)^r\mathrm{d}P(x)\\
&>  \sum_{b\in \Gamma} \int_{C_b(\Gamma_1)} d(x,\Gamma_1)^r\mathrm{d}P(x) + \int_{C_c(\Gamma_1)} d(x,\Gamma_1)^r\mathrm{d}P(x)\\
&= \int d(x, \Gamma_1)^r\mathrm{d}P(x) = G_{r,n}^P(a_1, \ldots, a_{i-1},c, a_{i+1}, \ldots, a_n).
\end{align*}
This yields a contradiction.  Consequently, the components $a_i$ of $a$ are pairwise distinct so that $\Gamma= \{a_1,\ldots, a_n\}$. In particular, we may assume that the neighbourhoods $U_i$ are pairwise disjoint.

\smallskip
Next assume that  $a_i \notin  {\rm argmin}_{y \in E} G_{r,1}^{P(\cdot|C)}(y)$ for some $i\!\in \{1, \ldots, n\}$ and some Borel set $C$ satisfying $W_{a_i}(\Gamma) \subset C \subset V_{a_i}(\Gamma)$, noting that $P(C) >0$. Then there exists $z\!\in E$ such that $G_{r,1}^Q(z) < G_{r,1}^Q(a_i)$ where $Q = P(\cdot\, |\, C)$.  Consider $y_s = s z +(1-s) a_i$, $s\!\in [0,1]$. Then $y_s\!\in U_i$ for $s>0$ small enough and using the convexity of $G_{r,1}^Q$ in case $r\ge 1$, we obtain
\[
G_{r,1}^Q(y_s) \le sG_{r,1}^Q(z)+(1-s)  G_{r,1}^Q(a_i) < G_{r,1}^Q(a_i).
\]
Consequently,
\[
\int_C \| x-y_s\|^r \mathrm{d}P(x) < \int_C \|x-a_i\|^r\mathrm{d}P(x).
\]
Hence, if $\{C_{a_j}(\Gamma): 1 \le j\le n\}$ denotes a Voronoi partition  of $E$ induced by $\Gamma$ with $C_{a_i}(\Gamma)= C$ and setting $\Gamma_2 = \big( \Gamma\setminus\{a_i\}\big) \cup \{y_s\}$ where $y_s \notin \Gamma$,
\begin{align*}
&G_{r,n}^P(a_1,\ldots, a_{i-1}, y_s, a_{i+1}, \ldots, a_n) = \int d(x, \Gamma_2)^r \mathrm{d}P(x)\\
& \qquad\qquad = \sum_{j=1} ^n\int_{C_{a_j}(\Gamma)} d(x,\Gamma_2)^r \mathrm{d}P(x) \\
&  \qquad\qquad \le  \sum_{j\neq i} \int_{C_{a_j}(\Gamma)} \|x-a_j\|^r \mathrm{d}P(x) + \int_{C} \|x-y_s\|^r \mathrm{d}P(x)\\
&  \qquad\qquad <  \sum_{j\neq i} \int_{C_{a_j}(\Gamma)} \|x-a_j\|^r \mathrm{d}P(x) + \int_{C} \|x-a_i\|^r \mathrm{d}P(x)\\
& \qquad\qquad = \int d(x,\Gamma)^r \mathrm{d}P(x) = G_{r,n}^P(a)
 \end{align*}
which again contradicts global minimality  of $G_{r,n}^P$ at $a$ on $U$.

\smallskip
\noindent $(b)$ In view of $(a)$, the proof of $(b)$ is literally the same as the proof of~\cite[Proposition~1.3.3]{LuPag23}.\hfill$\Box$
\end{proof}

\bigskip
  In general, local minima may neither be contained in ${\rm supp}(P)$ nor be pairwise distinct. Let e.g.\ $E=\R$, $P=U([-1,1])$, $n=2$, $a_1=0$ and $a_2 =3$. Then $(a_1,a_2)$ is a local minimizer of $G_{r,2}^P$, $r\!\in (0,+\infty)$.  If $n=3$ and $a_2=a_3$, then $(a_1, a_2,a_3)$ is a local minimizer of $G_{r,3}^P$. However, these local minima are not strict and have components  lying  outside the support of $P$.

\begin{thm}[Hilbert setting]\label{prop:1.3.5} Let $(E, \langle\cdot,\cdot\rangle)$ be a Hilbert space
and assume that ${\rm supp}(P)$ is convex. If $G_{r,n}^P$ has a strict local minimum at $a=(a_1,\ldots, a_n) \!\in E^n$, then $a_i \!\in {\rm supp}(P)$  for every $i\!\in \{1,\ldots,n\}$.  In particular, the conclusions of Theorem~\ref{thm:1.3.4} are true.

Moreover, if $r\ge 2$, $P(\{a_i\})=0$ for any $i\!\in \{1, \ldots,n\}$ when $r>2$ and ${\rm supp}(P)$ has nonempty interior, then $a_i$ is an interior point of ${\rm supp}(P)$ for every $i$.
\end{thm}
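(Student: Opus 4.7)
The proof separates cleanly into the two assertions.

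\textbf{First assertion.} Suppose for contradiction that some $a_i\notin\mathrm{supp}(P)$. Since $\mathrm{supp}(P)$ is closed, convex and nonempty, let $\pi(a_i)$ denote the Hilbert projection of $a_i$ onto $\mathrm{supp}(P)$, and set $v := \pi(a_i) - a_i \neq 0$. For $t\in(0,1)$, perturb only the $i$-th component:
\[
a(t) := (a_1,\ldots,a_{i-1},\, a_i + tv,\, a_{i+1},\ldots,a_n).
\]
The variational characterization of the projection, $\langle x - \pi(a_i), v\rangle \geq 0$ for every $x\in\mathrm{supp}(P)$, combined with $\langle \pi(a_i) - a_i, v\rangle = \|v\|^2$, yields
\[
\|x - a_i - tv\|^2 \;\leq\; \|x - a_i\|^2 - t(2-t)\|v\|^2 \;<\; \|x - a_i\|^2
\]
for every $x\in\mathrm{supp}(P)$ and every $t\in(0,1)$. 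Consequently $\min_{1\leq j\leq n}\|x - a_j(t)\|^{r}\leq \min_{1\leq j\leq n}\|x - a_j\|^{r}$ $P$-a.s., which after taking expectations gives $G_{r,n}^{P}(a(t))\leq G_{r,n}^{P}(a)$. Since $a(t)\to a$ and $a(t)\neq a$ for $t>0$, this contradicts strict local minimality. Hence each $a_i\in\mathrm{supp}(P)$. Apart from the trivial degenerate case $\mathrm{supp}(P)=\{a_i\}$ (which forces $n=1$, and in which the conclusions are immediate), convexity of $\mathrm{supp}(P)$ makes each $a_i$ non-isolated in $\mathrm{supp}(P)$, so Theorem~\ref{thm:1.3.4} applies.

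\textbf{Second assertion.} Assume $r\geq 2$, $P(\{a_i\})=0$ when $r>2$, and $\mathrm{int}\,\mathrm{supp}(P)\neq\varnothing$. Theorem~\ref{thm:1.3.4}$(b)$ ensures that $\Gamma := \{a_1,\ldots,a_n\}$ is $L^r$-stationary; using $\nabla\|\cdot\|(y) = y/\|y\|$ and identifying $E^*$ with $E$ via Riesz, stationarity reads
\[
\E\bigl[\mathbf{1}_{C_{a_i}(\Gamma)\setminus\{a_i\}}(X)\,\|X - a_i\|^{r-2}(X - a_i)\bigr] \;=\; 0 \qquad\text{for each } i.
\]
Suppose, for contradiction, that $a_i\in\partial\,\mathrm{supp}(P)$. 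Since $\mathrm{supp}(P)$ is convex with nonempty interior, the supporting hyperplane theorem produces $u\neq 0$ with $\langle u, x - a_i\rangle\leq 0$ for every $x\in\mathrm{supp}(P)$. Pairing the stationarity identity with $u$, the scalar integrand $\mathbf{1}_{C_{a_i}(\Gamma)\setminus\{a_i\}}(X)\|X - a_i\|^{r-2}\langle u, X - a_i\rangle$ is $\leq 0$ $P$-a.s.\ (this is where $r\geq 2$ is used, through the nonnegativity of $\|X-a_i\|^{r-2}$) and has zero expectation, hence vanishes a.s. Since $\|X-a_i\|^{r-2}>0$ on $\{X\neq a_i\}$, this forces $\langle u, X - a_i\rangle = 0$ $P$-a.s.\ on $C_{a_i}(\Gamma)\setminus\{a_i\}$, and trivially at $a_i$ too. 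Writing $H := \{y\in E : \langle u, y - a_i\rangle = 0\}$, the open set $W_{a_i}(\Gamma)\setminus H\subset C_{a_i}(\Gamma)\setminus H$ has zero $P$-mass, hence is disjoint from $\mathrm{supp}(P)$; equivalently, $\mathrm{supp}(P)\cap W_{a_i}(\Gamma)\subset H$. Finally, choose $p\in\mathrm{int}\,\mathrm{supp}(P)$ and set $p_\theta := (1-\theta)a_i + \theta p$. A standard convex-geometry fact (scale a ball around $p$ by $\theta$) gives $p_\theta\in\mathrm{int}\,\mathrm{supp}(P)$ for every $\theta\in(0,1]$; for $\theta$ small enough, $p_\theta$ lies in $W_{a_i}(\Gamma)$ together with a small open ball around it contained in $\mathrm{supp}(P)\cap W_{a_i}(\Gamma)\subset H$, which is impossible since a hyperplane has empty interior.

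\textbf{Main obstacle.} The delicate step is the second assertion: one must convert the vector-valued stationarity identity into a scalar sign inequality via the supporting functional $u$, and then upgrade the resulting almost-sure equality on the Voronoi cell of $a_i$ into the geometric inclusion $\mathrm{supp}(P)\cap W_{a_i}(\Gamma)\subset H$, using the characterization of $\mathrm{supp}(P)$ as the complement of the largest open $P$-null set. The sign analysis is precisely what dictates the hypothesis $r\geq 2$, while the atom hypothesis $P(\{a_i\})=0$ when $r>2$ safeguards the a.s.\ equality at the unique point $X=a_i$ where the weight $\|X-a_i\|^{r-2}$ degenerates. The first assertion, by contrast, is a short consequence of Hilbert projection combined with the strictness of the local minimum.
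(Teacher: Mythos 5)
Your proof is correct and largely parallels the paper's. The first assertion is the paper's argument in spirit and mechanism: project $a_i$ onto the closed convex set $\mathrm{supp}(P)$, move $a_i$ along the segment toward the projection, use the Hilbert identity to get $\|y_s-x\|^2<\|a_i-x\|^2$ for every $x$ in the support, and conclude that $G_{r,n}^P$ does not increase along this perturbation — contradicting strict local minimality. For the second assertion the paper delegates the key implication (stationarity plus $P\big(\mathring{K}\cap W_{a_i}(\Gamma)\big)>0$ implies $a_i\in\mathring{K}$) to Remarks~1.3.4 and~1.3.5 of~\cite{LuPag23}, and then merely verifies nonemptiness of the open sets $\mathring{K}\cap W_{a_i}(\Gamma)$ via the segment $y_s=sz+(1-s)a_i$. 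You instead re-derive that implication from scratch: at a hypothetical boundary point $a_i$, pair the Hilbert stationarity identity with a supporting functional $u$, observe the resulting scalar integrand is one-signed with zero mean hence vanishes a.s., deduce that $\mathrm{supp}(P)\cap W_{a_i}(\Gamma)$ lies in the hyperplane $H$, and contradict this by placing an open ball around an interior point $p_\theta\in\mathring{K}\cap W_{a_i}(\Gamma)$ inside $H$. This is the same underlying geometric idea unpacked into a self-contained argument, which is a genuine gain in readability over the paper's appeal to external remarks. One small inaccuracy in your commentary: the hypothesis $P(\{a_i\})=0$ for $r>2$ does not actually enter your argument — the indicator $\mathbf{1}_{C_{a_i}(\Gamma)\setminus\{a_i\}}$ already excludes the point $a_i$, so the claim that it ``safeguards the a.s.\ equality at $X=a_i$'' is not right; that hypothesis is presumably consumed inside the cited Remarks of~\cite{LuPag23}, a route your direct argument bypasses. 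This misattribution is harmless to the logic.
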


\begin{proof} Let $U = \prod_{i=1}^n U_i$ be a neighbourhood of $a$ such that $G_{r,n}^P$ has a strict (global) minimum at $a$ on $U$. Let $N$ denote the number of pairwise distinct components of $a$ so that there exist $i_1=1<i_1< \cdots<i_{_N} \le n$ such that $a_{i_1}, \ldots,a_{i_{_N}}$ are pairwise distinct. We may assume that the  neighborhoods $U_{i_j}$ are pairwise disjoint.  Set $\Gamma= \{a_{i_1}, \ldots, a_{_{i_N}}\}$ and $K = {\rm supp}(P)$. Assume $a_i \notin K$ for some $i\!\in \{i_1,\ldots,i_{_N}\}$. Let $b \!\in K$ be the (unique) $K$-valued best approximation of $a_i$ such that $\|a_i-b\| = d(a_i, K)$ or equivalently
\[
\langle a_i-b, x-b\rangle \le 0 \quad \mbox{ for every } x\!\in K.
\]
Consider $y_s = sb +(1-s) a_i$, $s\!\in [0,1]$. Then $y_s \!\in U_i$ for $s>0$ small enough and
\[
\|y_s-x\|^2 < \|a_i-x\|^2 \quad \mbox{ for every } x\!\in K.
\]

\noindent In fact
\begin{align*}
&\|y_s-x\|^2-\|a_i-x\|^2 = \| y_s-b +b-x\|^2 - \|a_i-b+b-x\|^2\\
& \qquad\qquad = \| y_s-b \|^2 +2\langle y_s-b, b-x\rangle - \|a_i-b\|^2-2 \langle a_i-b, b-x \rangle\\
& \qquad\qquad = \big( (1-s)^2-1\big) \|a_i-b\|^2 + \big( 2(1-s)-2\big)  \langle a_i-b, b-x\rangle <0.
\end{align*}
Set $\Gamma_2 =  \big(\Gamma\setminus\{a_i\}\big)\cup\{y_s\}$ where $y_s \notin \Gamma$. Let $\{C_c(\Gamma): c\!\in \Gamma\}$ denote a Voronoi partition induced by $\Gamma$. Then
\begin{align*}
&G_{r,n}^P (a_1,\ldots, a_{i-1}, y_s,a_{i+1},\ldots,a_n) = \int d(x, \Gamma_2)^r \mathrm{d}P(x)\\
&\qquad\qquad= \sum_{c \in \Gamma} \int_{C_c(\Gamma)}d(x, \Gamma_2)^r \mathrm{d}P(x)\\
&\qquad\qquad\le \sum_{c\in \Gamma, \, c\neq a_i}  \int_{C_c(\Gamma)} \| x-c\|^r \mathrm{d}P(x) +  \int_{C_{a_i}(\Gamma) \cap K}\| x-y_s\|^r \mathrm{d}P(x)\\
&\qquad\qquad\le \sum_{c\in \Gamma, \, c\neq a_i}  \int_{C_c(\Gamma)}\| x-c\|^r \mathrm{d}P(x) +  \int_{C_{a_i}(\Gamma) \cap K}\| x-a_i\|^r \mathrm{d}P(x)\\
&\qquad\qquad= \int d(x, \Gamma)^r \mathrm{d}P(x) = G_{r,n}^P(a).
\end{align*}
This yields a contradiction to strict minimality of $G_{r,n}^P$ at $a$ on $U$.

Now assume $r\ge 2$, $P(\{a_i\})= 0$ for every $i$ when $r>2$ and $\mathring{K} \neq \varnothing$. We already know that $a_i \!\in K$ for every $i$ so that by Theorem~\ref{thm:1.3.4} the components $a_i$ of $a$ are pairwise distinct and $\Gamma= \{a_1, \ldots, a_n\}$ is an $L^r$-stationary $n$-quantizer for $P$, noting that $K$ has no isolated points. Consequently, $\Gamma\subset \mathring{K} $ provided $P\big(\mathring{K}\cap W_{a_i}(\Gamma)\big)>0$ for every $i$ (see \cite[Remarks~1.3.4 and~1.3.5]{LuPag23}). This is  true since the open sets $\mathring{K} \cap W_{a_i}(\Gamma)$ are not empty. In fact fix $z\!\in \mathring{K}$ and let $y_s = s z +(1-s)a_i$. One checks that $y_s \!\in \mathring{K}$ for every $s\!\in (0,1]$ and $y_s \!\in W_{a_i}(\Gamma)$ for $s>0$ small enough.\hfill$\Box$
\end{proof}

\end{document}